\newtheorem{theorem}{Theorem}
\theoremstyle{definition}
\newtheorem*{remark}{Remark}
\newcommand{\tr}{\mathop{\mathrm{tr}}}
\begin{document}

\begin{center}
\Large \textsf{Invariants of symbols of the linear differential operators}\footnote{The work is partially supported by the Russian Foundation for Basic Research, Grant 18-29-10013.}

\vspace{5pt}

\normalsize \textsf{Pavel Bibikov\footnote{e-mail: tsdtp4u@proc.ru},
Valentin Lychagin\footnote{e-mail: valentin.lychagin@uit.no}}

\vspace{5pt}

Institute of Control Sciences RAS, 117997 Moscow
(Russia), 
\end{center}

\abstract{In this paper we classify the
symbols of the linear differential operators of order $k$, which act from the module $C^\infty(\xi)$ to the module $C^\infty(\xi^t)$, where $\xi\colon E(\xi)\to M$ is vector bundle over the smooth manifold $M$, bundle $\xi^t$ is either $\xi^*$ with fiber $E^*:=\mathrm{Hom}(E,\mathbb{C})$ or $\xi^\flat$ with fiber $E^\flat:=\mathrm{Hom}(E, \Lambda^n T^*)$ and $C^\infty(\xi)$, $C^\infty(\xi^t)$ are the modules of their smooth sections. 
To find invariants of the symbols we associate with every non-degenerated symbol the tuple of linear operators acting on space $E$ and reduce our problem to the classification of such tuples with respect to some orthogonal transformations. Using the results of C. Procesi, we find generators for the
field of rational invariants of the symbols and in terms of these invariants provide a criterion of equivalence of non-degenerated
symbols.}

\vspace{10pt}

2000 MSC: 22E26, 32L05, 53A55%

Keywords: linear differential operator, symbol, vector bundle, homogeneous form, differential invariant, jet space%

Subject classification: real and complex differential geometry, jet
spaces, differential equations.


\section{Introduction}

This paper is a continuation of the work~\cite{LY4}, where the equivalence problem of the linear differential operators acting on the smooth sections of the vector bundles, was studied. 

The study of invariants of the linear differential operators is important and has a lot of applications in various areas of mathematics. The first work of this area belongs to Bernard Riemann, who discovered the curvature of the second order scalar differential operator, defined by its symbol. 

In the present paper we discuss the problem of classification for 
symbols of linear differential operators, acting in vector bundles.

Namely, we consider two vector bundles $\xi\colon E(\xi)\to M$ and $\xi^t\colon E^t(\xi^t)\to M$ over the smooth manifold $M$ of dimension $n$ with the modules $C^\infty(\xi)$ and $C^\infty(\xi^t)$ of their smooth sections (we provide all constructions over the complex field $\mathbb{C}$ and the real field $\mathbb{R}$) and the modules $\mathbf{Diff}_k(\xi, \xi^t)$ of the linear differential operators of order $k$. Here vector bundle $\xi^t$ is one of two possible bundles:
\begin{itemize}
    \item $\xi^t=\xi^*\colon \mathrm{Hom}(\xi, \mathbb{R})\to M$
    \item $\xi^t=\xi^\flat \colon \mathrm{Hom}(\xi, \Lambda^n\tau^*)\to M$, where $\Lambda^n\tau^*$ is the bundle of differential $n$-forms on manifold $M$.
\end{itemize}

Each operator $\Delta\in \mathbf{Diff}_k(\xi,\xi^t)$ acts from the first module to the second one, i.e. $\Delta\colon C^\infty(\xi)\to C^\infty(\xi^t)$. Note that the classification of the linear differential operators itself (see~\cite{LY4}) consists of two parts. In the first, ``algebraic'', part the invariants of the symbols of operators are constructed, and in the second, ``geometric'', part the canonical connection, associated with the differential operator, is defined, and using these connections and the procedure of quantazation the invariants of symbols are prolonged to invariants of the operators. The aim of this paper is the realization of the first, ``algebraic'' part of this scheme, in general situation of arbitrary $k$-th order symbols of differential operators $\Delta\in \mathbf{Diff}_k(\xi,\xi^t)$.

Let us fix a point $a\in M$ and consider the value of the symbol of differential operator at this point. Then the symbol of a linear differential operator $\Delta\in \mathbf{Diff}_k(\xi,\xi^t)$ becomes a tensor $\sigma(\Delta)$, which could be viewed as a fiber-wise homogeneous $k$-form on the cotangent space $T^*=T_a^*M$ with values in the space $\mathrm{Hom}(E,E^t)$, where $E=\xi^{-1}(a)$ and $E^t=(\xi^t)^{-1}(a)$ are the fibers of bundles $\xi$ and $\xi^t$ correspondingly. The group of automorphisms of bundle $\xi$, that preserve the point $a$, acts on the space of such symbols in the following way: on space $T^*$ it acts by linear transformations (this action naturally prolongs to the action on the space $\mathrm{\mathsf{S}}^kT^*$ on the space of homogeneous $k$-forms), and on $\mathrm{Hom}(E,E^t)$ it acts by left-right mutliplications, i.e. we consider the group $\mathrm{GL}(T)\times\mathrm{GL}(E)$. Note that the action of group $\mathrm{GL}(E)$ has non-trivial kernel $\{\pm \mathbf{1}\}\simeq \mathbb{Z}_2$. So we study the action of group $G:=\mathrm{GL}(T)\times (\mathrm{GL}(E)/\mathbb{Z}_2)$.

To find $G$-invariants of the symbols $\sigma\colon \mathrm{\mathsf{S}}^kT^*\to \mathrm{Hom}(E,E^t)$, we use the special case of the so-called first fundamential theorem in classical invariant theory, which describes the invariants of the linear action of the classical groups on the system of linear operators acting on some vector space. In our situation we need the special case of this theorem, which was proved by C. Procesi (see~\cite{Pr1}). This result of Procesi works with the actions of orthogonal and symplectic groups on the tuples of linear operators and describes the generators of corresponding invariant field. We have to consider the field of rational invariants instead of traditional algebra of polynomial invariants, because the action of the center $\{\lambda \cdot\mathbf{1}\}\subset \mathrm{GL}(E)$ on space $\mathrm{Hom}(E,E^t)$ is non-trivial, so there are no polynomial invariants in our problem. 

Using the Procesi invariants for the tuples of linear operators, we construct the set of rational invariants, which separate the regular $G$-orbits of the symbols and hence, according to Rosenlicht theorem, generate the field of rational invariants. It is interesting to note that our invariants are defined not on the space $T^*$, and even not on the space $\mathrm{\mathsf{S}}^kT^*$, but on the space $\big(\mathrm{\mathsf{S}}^kT^*\big)^{\times N}$, where $N=\binom{n+k-1}{k}$. 

\section{Notations and definitions}

In this section we provide necessary definitions and notations for linear differential operators acting on the smooth sections of the vector bundles.

\subsection{Basic definitions}

Let $\xi\colon E(\xi)\to M$ be vector bundle of rank $m$ over the smooth $n$-dimensional manifold $M$. Denote by $\tau\colon TM\to M$ and $\tau^*\colon T^*M\to M$ the tangent and cotangent bundles over manifold $M$. Their symmetric powers we denote by $\mathrm{\textsf{S}}^k \tau$ and $\mathrm{\textsf{S}}^k \tau^*$ respectively. Also consider the trivial bundle $\mathbf{1}\colon \mathbb{R}\times M\to M$.

Modules of the smooth sections of bundles $\xi$, $\mathrm{\textsf{S}}^k \tau$, $\mathrm{\textsf{S}}^k \tau^*$ and $\mathbf{1}$ will be denoted as $C^\infty(\xi)$, $\Sigma_k(M):=C^\infty(\mathrm{\textsf{S}}^k \tau)$, $\Sigma^k(M):=C^\infty(\mathrm{\textsf{S}}^k \tau^*)$ and $C^\infty(M)$ respectively.

Now we take the two vector bundles $\xi$, $\xi^t$ and the corresponding modules. The module of linear differential operators of order $\leq k$ will be denoted by $\mathbf{Diff}_k(\xi,\xi^t)$. 

Factor-module $\mathbf{Smbl}_k(\xi,\xi^t):=\mathbf{Diff}_k(\xi,\xi^t)/\mathbf{Diff}_{k-1}(\xi,\xi^t)$ is called \emph{the symbol module}, and the image $\sigma_k(\Delta)\in \mathbf{Smbl}_k(\xi,\xi^t)$ of the linear differential operator $\Delta\in\mathbf{Diff}_k(\xi,\xi^t)$ after the canonical projection $\mathrm{smbl}\colon \mathbf{Diff}_k(\xi,\xi^t)\to \mathbf{Smbl}_k(\xi,\xi^t)$ is called \emph{the symbol of $\Delta$}.

It is also known, that $\mathbf{Smbl}_k(\xi,\xi^t)\simeq \mathrm{Hom}(\xi,\xi^t)\otimes \Sigma_k(M)$.

From now on we fix a point $a\in M$ and consider the restriction values of symbols at this point. So by a symbol we mean a tensor $\sigma\in \mathrm{Hom}(E,E^t)\otimes \mathrm{\textsf{S}}^k T$, or, in another terms, the linear map $\sigma\colon \mathrm{\textsf{S}}^k T^*\to \mathrm{Hom}(E,E^t)$. Here $E=\xi^{-1}(a)$ and $E^t=(\xi^t)^{-1}(a)$ are the fibers of bundles $\xi$ and $\xi^t$ over point $a$ and $T=\tau^{-1}(a)$ is the tangent plane to manifold $M$ at point $a$.

Group $G:=\mathrm{GL}(T)\times (\mathrm{GL}(E)/\mathbb{Z}_2)$ acts on the module $\mathbf{Smbl}(\xi,\xi^t)$ by linear
transformations: the first part $\mathrm{GL}(T)$ acts by linear
transformations on the base $T$, which prolong to the linear transformations of the symmetric power $\mathrm{\textsf{S}}^kT^*$, and the second part
$\mathrm{GL}(E)/\mathbb{Z}_2$ acts by left and right shifts on the fibers $E$ and $E^t$: if $\sigma_q:=\sigma(q)\in \mathrm{Hom}(E,E^t)$ is the image of homogeneous $k$-form $q\in\mathrm{\textsf{S}}^kT^*$, then the action is$$A\circ \sigma_q = A^{-t}\sigma_q A^{-1},$$ where $A\in \mathrm{GL}(E)/\mathbb{Z}_2$, $t$ is the dual operation and $A^{-t}:=(A^{-1})^t$. We classify the symbols $\sigma\colon \mathrm{\textsf{S}}^k T^*\to \mathrm{Hom}(E,E^t)$ with respect to this action.

\section{Invariants}

In this section we provide the construction of the rational invariants for the action of our group $G$ on the space $\mathbf{Smbl}(\xi,\xi^t)$ of symbols. 

We consider the following cases:
\begin{itemize}
    \item $E^t=E^*:=\mathrm{Hom}(E, \mathbb{R})$
    \item $E^t=E^\flat:=\mathrm{Hom}(E, \Lambda^nT^*)$
\end{itemize}

These two cases are based on the same construction. Namely, we reduce our problem to classification of the tuples of linear operators with respect to conjugations by orthogonal group. The following result belongs to C. Procesi (see~\cite{Pr1}).

\begin{theorem}\label{th.Pr}
Let $V$ be a Euclidian vector space with symmetric form $g$, $\mathcal{X}_1$, \ldots, $\mathcal{X}_\ell\in\mathrm{Hom}(V)$ be the tuple of linear operators on $V$, and group $\mathrm{O}_g(V)$ acts on such tuples by conjugations. Then the algebra of polynomial invariants for such action is generated by the polynomials $\tr (\mathcal{Y}_{i_1}\ldots \mathcal{Y}_{i_s})$, associated with non-commutative monomials $\mathcal{Y}_{i_1}\ldots \mathcal{Y}_{i_s}$ of the length $s\leq 2^{\dim V}-1$, where $\mathcal{Y}_{i_j}=\mathcal{X}_{i_j}$ or $(\mathcal{X}_{i_j})_g$ (the dual operator with respect to the symmetric bi-linear form $g$).
\end{theorem}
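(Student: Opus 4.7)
The plan is to follow the classical invariant-theoretic strategy: first reduce the problem to the first fundamental theorem (FFT) for the orthogonal group via polarisation, then translate the resulting scalar invariants back into trace polynomials, and finally bound the length of the admissible trace monomials using the trace identities satisfied by operators on a finite-dimensional space.

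First I would identify $\mathrm{Hom}(V)$ with $V\otimes V^{*}$ and use the nondegenerate form $g$, which supplies an $\mathrm{O}_g(V)$-equivariant isomorphism $V\cong V^{*}$, to further identify $\mathrm{Hom}(V)\cong V\otimes V$. Under this identification the tuple $(\mathcal{X}_1,\ldots,\mathcal{X}_\ell)$ becomes a collection of $2\ell$ vectors in $V$ on which $\mathrm{O}_g(V)$ acts diagonally, and the $g$-duality $\mathcal{X}\mapsto \mathcal{X}_g$ corresponds to swapping the two tensor factors of $V\otimes V$. A quick check using $A^{-t}=A$ for $A\in\mathrm{O}_g(V)$ confirms that conjugation commutes with the $g$-dual, so every word in the $\mathcal{Y}_{i_j}$ transforms by conjugation and its trace is genuinely invariant.

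Next, by Weyl's FFT for the orthogonal group, the algebra of polynomial invariants of $V^{\oplus 2\ell}$ under the diagonal $\mathrm{O}_g(V)$-action is generated by the pairwise scalar products $g(v_i,v_j)$. A monomial in these generators corresponds to a cyclic matching of the tensor legs of the $\mathcal{X}_i$ and $(\mathcal{X}_i)_g$, and the standard diagrammatic calculus of tensor contractions shows that each connected cycle in such a matching evaluates to a single trace $\tr(\mathcal{Y}_{i_1}\cdots\mathcal{Y}_{i_s})$, with the choice of $\mathcal{Y}_{i_j}\in\{\mathcal{X}_{i_j},(\mathcal{X}_{i_j})_g\}$ determined by the orientation of the matching. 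Hence trace monomials of the stated form generate the invariant algebra, and any invariant decomposes as a polynomial in these traces.

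Finally, to obtain the length bound $s\leq 2^{\dim V}-1$, I would use the fact that operators on an $n$-dimensional space satisfy the polarised Cayley--Hamilton identity, so that any trace monomial of length exceeding this threshold can be recursively rewritten as a polynomial in traces of shorter monomials, while the $g$-dual is compatible with these reductions and keeps us within the admissible class of words. This last step is, in my view, the main obstacle: the first two steps form a clean dictionary translating Weyl's FFT into trace language, whereas the length bound is a delicate consequence of the theory of trace identities for the orthogonal Lie algebra, and securing the precise exponential bound $2^{\dim V}-1$, rather than mere finite generation, is the technically hardest part of the argument and the reason one must invoke Procesi's refinement rather than the bare FFT.
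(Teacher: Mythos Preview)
The paper does not give a proof of this theorem at all: it is quoted as a result of Procesi and simply cited to~\cite{Pr1}. So there is nothing in the paper to compare your argument against; any proof you supply is automatically ``different'' from the paper's treatment.

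As to the sketch itself, the overall architecture (reduce via the FFT for $\mathrm{O}_g$ to contractions, reinterpret contractions as traces of words, then bound word length via trace identities) is indeed the standard route to Procesi's theorem. But your second step is misstated. An element of $\mathrm{Hom}(V)\cong V\otimes V$ is \emph{not} a pair of vectors in $V$, so the tuple $(\mathcal{X}_1,\ldots,\mathcal{X}_\ell)$ does not ``become a collection of $2\ell$ vectors in $V$'', and Weyl's FFT for $V^{\oplus 2\ell}$ with generators $g(v_i,v_j)$ is not the statement you need. What you actually need is: polarise a degree-$d$ invariant to a multilinear $\mathrm{O}_g$-invariant functional on $\mathrm{Hom}(V)^{\otimes d}\cong V^{\otimes 2d}$, and then invoke the tensor-power form of the FFT, which says that $\mathrm{O}_g$-invariants in $V^{\otimes 2d}$ are spanned by complete pairings (Brauer diagrams) of the $2d$ tensor legs. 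Each such pairing decomposes into cycles, and each cycle evaluates to a trace $\tr(\mathcal{Y}_{i_1}\cdots\mathcal{Y}_{i_s})$ with $\mathcal{Y}_{i_j}\in\{\mathcal{X}_{i_j},(\mathcal{X}_{i_j})_g\}$, exactly as you say afterwards. So the conclusion of your step is correct, but the intermediate appeal to $V^{\oplus 2\ell}$ and ``pairwise scalar products'' should be replaced by the Brauer/tensor version of the FFT.

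Your assessment of the third step is accurate: obtaining the explicit bound $s\le 2^{\dim V}-1$ is genuinely the hard part and is where Procesi's analysis of trace identities (not just naive Cayley--Hamilton) is required; your sketch does not actually establish that specific bound, only finite generation.
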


The similar result holds also for the symplectic group.

\begin{theorem}\label{th.Pr.S}
Let $V$ be a symplectic vector space with skew-symmetric form $\omega$, $\mathcal{X}_1$, \ldots, $\mathcal{X}_\ell\in\mathrm{Hom}(V)$ be the tuple of linear operators on $V$, and group $\mathrm{Sp}_\omega(V)$ acts on such tuples by conjugations. Then the algebra of polynomial invariants for such action is generated by the polynomials $\tr (\mathcal{Y}_{i_1}\ldots \mathcal{Y}_{i_s})$, associated with non-commutative monomials $\mathcal{Y}_{i_1}\ldots \mathcal{Y}_{i_s}$ of the length $s\leq 2^{\dim V}-1$, where $\mathcal{Y}_{i_j}=\mathcal{X}_{i_j}$ or $(\mathcal{X}_{i_j})_\omega$ (the dual operator with respect to the skew-symmetric bi-linear form $\omega$).
\end{theorem}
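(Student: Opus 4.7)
The strategy parallels Procesi's proof of Theorem~\ref{th.Pr} with $g$ replaced by $\omega$ throughout; the only genuine change is the bookkeeping of signs coming from skew-symmetry. By standard polarization it suffices to describe the multilinear $\mathrm{Sp}_\omega(V)$-invariants on $\mathrm{Hom}(V)^{\otimes N}$ for every $N$. Using $\omega$ to identify $V^*\simeq V$, one writes $\mathrm{Hom}(V)\simeq V\otimes V$; the conjugation action on $\mathrm{Hom}(V)$ then becomes the diagonal tensor action on $V\otimes V$, so the multilinear invariants on $\mathrm{Hom}(V)^{\otimes N}$ are identified with the $\mathrm{Sp}_\omega(V)$-invariants in $V^{\otimes 2N}$.

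The first fundamental theorem of invariant theory for the symplectic group states that the space of $\mathrm{Sp}_\omega(V)$-invariants in $V^{\otimes 2N}$ is spanned by the Brauer-diagram functionals $\prod_{(i,j)\in M}\omega(v_i,v_j)$, where $M$ runs over perfect matchings of $\{1,\dots,2N\}$. Each such matching, applied to our $N$ operator-nodes (each of which has an ``in'' and an ``out'' port), decomposes into a disjoint union of oriented cycles, and every cycle contributes the trace of a word in the $\mathcal{X}_i$. Whenever a node is entered in the direction opposite to the one that identifies it with its element of $V\otimes V^*$, the defining relation $\omega(\mathcal{X}v,w)=\omega(v,\mathcal{X}_\omega w)$ replaces the corresponding letter $\mathcal{X}_i$ by its $\omega$-dual $(\mathcal{X}_i)_\omega$. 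Consequently every Brauer pairing evaluates to a product of traces of words in the enlarged alphabet $\{\mathcal{X}_i,(\mathcal{X}_i)_\omega\}$, which proves generation.

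The length bound $s\le 2^{\dim V}-1$ is then deduced from the corresponding bound for invariants of a tuple of matrices under simultaneous conjugation by $\mathrm{GL}(V)$ (now applied to the enlarged alphabet), a classical consequence of Cayley--Hamilton via the Razmyslov--Procesi theorem on trace identities of $n\times n$ matrices. I expect the main technical difficulty to lie precisely at the intermediate step: unlike in the symmetric (orthogonal) situation of Theorem~\ref{th.Pr}, the skew-symmetry of $\omega$ forces careful tracking of signs when cycles are reversed, so that the identification of an individual Brauer pairing with a specific trace monomial in $\{\mathcal{X}_i,(\mathcal{X}_i)_\omega\}$---and, conversely, the verification that every such monomial arises from some pairing---must be carried out with attention to orientation conventions. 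Once these sign issues are resolved, Rosenlicht's theorem in the guise used for Theorem~\ref{th.Pr} transfers the result from multilinear to polynomial invariants and closes the argument.
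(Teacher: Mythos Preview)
The paper does not prove this statement at all: both Theorem~\ref{th.Pr} and Theorem~\ref{th.Pr.S} are quoted as results of Procesi, with a reference to~\cite{Pr1}, and no argument is supplied. So there is nothing to compare your approach against within the paper itself; what you have written is essentially an outline of how Procesi's own proof goes (first fundamental theorem for $\mathrm{Sp}_\omega$, Brauer pairings, cycle decomposition into trace words, then a Cayley--Hamilton/Razmyslov--Procesi bound on word length). That outline is sound as a strategy.

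One correction, though: your closing sentence misattributes a step. Rosenlicht's theorem concerns separation of generic orbits by rational invariants and is used elsewhere in the paper (e.g.\ in the proof of Theorem~\ref{th.b-A} and in the transcendence-degree counts of Section~4); it plays no role in passing from multilinear invariants back to polynomial invariants. That passage is exactly the restitution half of the polarization argument you already invoked at the start, and requires nothing beyond linear algebra. If you keep the sketch, drop the reference to Rosenlicht there.
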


We apply the Procesi theorems for the special tuples of linear operators associated with the symbols $\sigma\colon \mathrm{\textsf{S}}^k T^*\to \mathrm{Hom}(E,E^t)$ and get the set of rational functions on copies of $\mathrm{\textsf{S}}^k T^*$. Thus we ``kill'' the action of the group $\mathrm{GL}(E)/\mathbb{Z}_2$. 

\subsection{Preparation}

Let $E$ and $T$ be two vector spaces and $E^t$ is one of two dual spaces: either $E^*=\mathrm{Hom}(E,\mathbb{C})$ or $E^\flat=\mathrm{Hom}(E, \Lambda^n T^*)$, where $n=\dim T$ and $\Lambda^n T^*$ is the space of exterior $n$-forms. 

For $x\in E$ and $u\in E^t$ denote by $\langle x,u\rangle = \langle u,x\rangle$ the paring of the elements $x\in E$ and $u\in E^t$, so in case $E^t=E^*$ one has $\langle\cdot,\cdot\rangle\colon E\times E^*\to \mathbb{R}$, and in case $E^t=E^\flat$ one has $\langle\cdot,\cdot\rangle\colon E\times E^\flat\to \Lambda^n T^*$.

Let $b\in \mathrm{Hom}(E,E^t)$ be a linear operator. Then the dual operator $b^t\in \mathrm{Hom}(E,E^t)$ is defined be relation $\langle bx,y\rangle = \langle x,b^ty\rangle$, for all $x$, $y\in E$. 

Also we identify this map with bi-linear form $\langle bx,y\rangle$, $x$, $y\in E$, with the values in $\mathbb{C}$ or $\mathbb{R}$ (when $E^t=E^*$) or in $\Lambda^n T^*$ (when $E^t=E^\flat$). 

We denote this form also by $b$, so $b(x,y)=\langle bx, y\rangle$ and $b(y,x)=\langle b^tx, y\rangle$ for all $x$, $y\in E$.

Let $b\in\mathrm{Hom}(E, E^t)$ be a reversible bi-linear form and $\mathcal{A}\in\mathrm{End}(E)$ be the linear operator. We define the $b$-dual linear operator $\mathcal{A}_b$ as follows: $b(\mathcal{A}x,y)=b(x,\mathcal{A}_by)$ for all $x$, $y\in E$. 

Note, that $$\langle b\mathcal{A}x, y\rangle=b(\mathcal{A}x,y)=b(x,\mathcal{A}_by)=\langle bx, \mathcal{A}_by\rangle=\langle (\mathcal{A}_b)^tbx, y\rangle,$$ so $$\mathcal{A}_b=(b\mathcal{A}b^{-1})^t=b^{-t} \mathcal{A}^t b^t.$$

The bi-linear forms $b_s:=\frac{b+b^t}{2}$ and $b_a:=\frac{b-b^t}{2}$ give us the splitting of bi-linear form $b$ in the sum of symmetric and skew-symmetric parts.

Assume, that bi-linear form $b$ and its symmetric part $b_s$ are reversible. Then we associate with  bi-linear form $b$ two linear operators $$\mathcal{H}^b, \;\mathcal{S}^b\in \mathrm{End}(E), \quad \mathcal{H}^b:=b^{-t} b,\quad\text{and}\quad \mathcal{S}^b:=b_s^{-1}b_a.$$
There is the following relation between these two operators:
$$\mathcal{S}^b = (b+b^t)^{-1}(b-b^t) = ((b^{-1})^t b+1)^{-1}(b^{-1})^t b^t((b^{-1})^tb-1) = (\mathcal{H}^b+1)^{-1}(\mathcal{H}^b-1).$$ Moreover, $$\mathcal{H}^b=(1+\mathcal{S}^b)(1-\mathcal{S}^b)^{-1}$$ and
\begin{align*}
\mathcal{S}^b_{b_s} &= b_s^{-1}(\mathcal{S}^b)^t b_s = b_s^{-1}(-b_a)(b_s^{-1})^tb_s = -\mathcal{S}^b, \\ (\mathcal{H}^b)^{-t} b_{s,a} (\mathcal{H}^b)^{-1} &=
b(b^{-1})^t\frac{b\pm b^t}{2} b^{-1}b^t = \frac{b\pm b^t}{2}=b_{s,a}. \end{align*}
Therefore, operator $\mathcal{H}^b$ preserves both forms $b_s$ and $b_a$, $$\mathcal{H}^b\in \mathrm{O}_{b_s}(E)\cap \mathrm{Sp}_{b_a}(E).$$

We have the following actions of the group $\mathrm{GL}(E)$ on the space $\mathrm{End}(E)$ of linear operators and on the space $\mathrm{Hom}(E,E^t)$ of bi-linear forms by the following relations:
\begin{equation}\label{eq.E}
A\circ \mathcal{X} = A\mathcal{X}A^{-1}\quad\text{and}\quad A\circ b=A^{-t} bA^{-1},
\end{equation}
where $A\in \mathrm{GL}(E)$, $\mathcal{X}\in\mathrm{End}(E)$ and $b\in\mathrm{Hom}(E,E^t)$ is a bi-linear form. 

Then, $$\mathcal{H}^{A\circ b} = (A\circ b)^{-t} (A\circ b) = (Ab^{-t} A^t)(A^{-t}bA^{-1}=A(b^{-t}b)A^{-1}=A\circ \mathcal{H}^b.$$ In similar way we get $$\mathcal{S}^{A\circ b} = A\circ\mathcal{S}^b.$$

We call bi-linear form $b$ non-degenerated, if $b$ and $b_s$ are reversible and operator spectrum of operator $\mathcal{S}^b$ is simple (i.e. all eigenvalues of $\mathcal{S}^b$ are distinct).

The following theorem is valid.

\begin{theorem}\label{th.b-A}
1. Two non-degenerated bi-linear forms $b$ and $\widetilde{b}$ are $\mathrm{GL}(E)$-equivalent if and only if symmetric forms $b_s$ and $\widetilde{b}_s$ are $\mathrm{GL}(E)$-equivalent and operators $\mathcal{S}^b$ and $\mathcal{S}^{\widetilde{b}}$ are $\mathrm{GL}(E)$-equivalent.

2. Field of rational $\mathrm{GL}(E)$-invariants of bi-linear forms is generated by functions $\tr (\mathcal{S}^b)^k$, where $k\leq \dim E$ is even.
\end{theorem}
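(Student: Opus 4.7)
The forward direction of part~1 is immediate from the equivariances $(A\circ b)_s = A\circ b_s$ (checked directly from $A\circ b = A^{-t}bA^{-1}$) and $\mathcal{S}^{A\circ b} = A\circ\mathcal{S}^b$ already verified in the excerpt. For the reverse direction, I first use the given equivalence of the symmetric parts to reduce, after replacing $\widetilde{b}$ by a suitable $A_1\circ\widetilde{b}$, to the case $b_s = \widetilde{b}_s$. The decomposition $b = b_s(I+\mathcal{S}^b)$, read as the composition $E\to E\to E^t$, then reveals that finding $A\in\mathrm{GL}(E)$ with $A\circ b = \widetilde{b}$ is equivalent to finding $A\in\mathrm{O}_{b_s}(E)$ with $A\mathcal{S}^b A^{-1} = \mathcal{S}^{\widetilde{b}}$. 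Thus the task becomes that of promoting the given $\mathrm{GL}(E)$-conjugacy between $\mathcal{S}^b$ and $\mathcal{S}^{\widetilde{b}}$ to a $b_s$-orthogonal conjugacy.

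This promotion is the crux of the argument. Both operators are $b_s$-skew (since $\mathcal{S}_{b_s} = -\mathcal{S}$ was established earlier) and share the same simple spectrum $\{\lambda_i\}$ by the $\mathrm{GL}(E)$-conjugacy. Skew-symmetry forces $b_s(v,v)=0$ on any eigenvector $v$ with $\lambda\neq 0$, so non-degeneracy of $b_s$ pairs each such $v_i$ (eigenvalue $\lambda_i$) with an eigenvector $w_i$ (eigenvalue $-\lambda_i$) for which $b_s(v_i,w_i)\neq 0$. After rescaling so that $b_s(v_i,w_i)=1$, the resulting hyperbolic pairs, together with the unique anisotropic direction at $\lambda=0$ when $\dim E$ is odd, yield a basis in which $b_s$ has a fixed Gram matrix and $\mathcal{S}^b$ has a prescribed diagonal form. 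Running the identical construction for $\mathcal{S}^{\widetilde{b}}$ and taking the change-of-basis between the two standardized bases produces the desired element of $\mathrm{O}_{b_s}(E)$.

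Part~2 then follows by combining part~1 with the fact that over $\mathbb{C}$ all non-degenerate symmetric bi-linear forms lie in a single $\mathrm{GL}(E)$-orbit, so $b_s$ supplies no non-constant rational invariants. The rational invariants of $b$ therefore coincide with the $\mathrm{GL}(E)$-conjugation invariants of $\mathcal{S}^b$, which are classically generated by the power traces $\tr(\mathcal{S}^b)^k$ for $k\leq\dim E$. The odd-power traces vanish identically because the spectrum is symmetric about $0$, leaving exactly the even-power traces as a generating set. The principal obstacle I anticipate lies in the hyperbolic-pair construction of the previous paragraph: one must verify that the eigenvector matching between $\mathcal{S}^b$ and $\mathcal{S}^{\widetilde{b}}$ can be carried out so as to respect simultaneously both the eigenvalue pairing $\lambda_i\leftrightarrow-\lambda_i$ and the normalization $b_s(v_i,w_i)=1$, and it is precisely here that the simple-spectrum hypothesis becomes indispensable, as it eliminates the continuous moduli that would otherwise obstruct a canonical choice within each eigenspace.
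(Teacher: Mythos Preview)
Your proof is correct but proceeds differently from the paper in both parts. For Part~1 the paper simply invokes Procesi's theorem (Theorem~\ref{th.Pr}) together with $b_a=b_s\mathcal{S}^b$: after reducing to $b_s=\widetilde b_s=:g$, the required $\mathrm{O}_g$-conjugacy of $\mathcal{S}^b$ and $\mathcal{S}^{\widetilde b}$ holds because the Procesi trace invariants of a single $g$-skew operator all reduce to $\pm\tr(\mathcal{S}^k)$, and these agree by the assumed $\mathrm{GL}(E)$-conjugacy. You instead build an explicit hyperbolic eigenbasis for each operator and compare, which is more elementary (it avoids the invariant-theoretic black box) and makes transparent exactly where the simple-spectrum hypothesis is consumed, at the price of a longer argument. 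For Part~2 the paper does a transcendence-degree count: the generic $\mathrm{GL}(E)/\mathbb{Z}_2$-orbit has codimension $\lfloor\dim E/2\rfloor$, the even power traces $\tr(\mathcal{S}^b)^{2k}$ furnish that many algebraically independent invariants, and Rosenlicht's theorem finishes. Your deduction from Part~1---over $\mathbb{C}$ the symmetric part lies in a single orbit, so equivalence of $b$ collapses to $\mathrm{GL}(E)$-conjugacy of $\mathcal{S}^b$, whence the invariant field is pulled back from the classical power-trace field---is equally valid; just note that the step ``these invariants separate generic orbits, therefore they generate the rational invariant field'' is itself an appeal to Rosenlicht, which you are using implicitly.
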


\begin{proof}
The first part of the theorem immediately follows from the Prochesi theorem~\ref{th.Pr} and the relation $b_a=b_s\mathcal{S}^b$. To prove the second part note that the codimension of the general $\mathrm{GL}(E)/\mathbb{Z}_2$-orbit of bi-linear form equals $\Big[\frac 1 2\dim E\Big]$, so the field of rational $\mathrm{GL}(E)/\mathbb{Z}_2$-invariants of bi-linear forms has the transcendence degree $\Big[\frac 1 2\dim E\Big]$. On the other hand functions $\tr (\mathcal{S}^b)^{2k}$, where $k=1$, \ldots, $\Big[\frac{\dim E}{2}\Big]$ are $\mathrm{GL}(E)/\mathbb{Z}_2$-invariant and algebraically independent. Hence, according to Rosenlicht theorem (see~\cite{Rosenlicht}), these functions generate the field of rational $\mathrm{GL}(E)/\mathbb{Z}_2$-invariants.
\end{proof}

\begin{remark}
For the case of the real field $\mathbb{R}$ there is one more discrete $\mathrm{GL}(E)$-invariant of bi-linear form: signature of the symmetric part $b_s$.
\end{remark}

\subsection{Generators of the field of rational $G$-invariants}

Here we apply the results and constructions from the previous section for the classification of the symbols $\sigma\colon \mathrm{\mathsf{S}}^kT^*\to \mathrm{Hom}(E,E^t)$ over the complex field $\mathbb{C}$.

Let $q_1$, \ldots, $q_N\in \mathrm{\mathsf{S}}^kT^*$ be an arbitrary homogeneous $k$-forms. Then their images $\sigma_{q_i}:=\sigma(q_i)$ are bi-linear forms from $\mathrm{Hom}(E,E^t)$. The group $\mathrm{GL}(E)/\mathbb{Z}_2$ acts on such bi-linear forms by formula~(\ref{eq.E}). 

Assume that all bi-linear forms $\sigma_{q_i}$ are non-degenerated. Put $g:=(\sigma_{q_1})_s$ --- symmetric bi-linear form. Introduce operators $\mathcal{A}_i$ for $i=1$, \ldots, $N$ by the following relations: $\mathcal{A}_1=\mathcal{S}^g$ and $\mathcal{A}_i = g^{-1}\sigma_{q_i}$ for $i\geq 2$. The dual operators with respect to bi-linear form $g$ are denoted by $(\mathcal{A}_i)_g$.

According to theorem~\ref{th.b-A}, $\mathrm{GL}(E)/\mathbb{Z}_2$-equivalence class of $N$-tuple $(\sigma_{q_1}, \ldots, \sigma_{q_N})$ is uniquely defined by $\mathrm{GL}(E)/\mathbb{Z}_2$-equivalence class of $(N+1)$-tuple $(g, \mathcal{A}_1, \ldots, \mathcal{A}_N)$ or, in another terms, by $\mathrm{O}_g(E)/\mathbb{Z}_2$-equivalence class of $N$-tuple $(\mathcal{A}_1, \ldots, \mathcal{A}_N)$. It follows from theorem~\ref{th.b-A}, that the invariants of such tuples are generated by functions $$J^\sigma_I\colon (\mathrm{\textsf{S}}^kT^*)^{\times N}\to \mathbb{R}, \quad J^\sigma_I:=\tr(\mathcal{X}_{i_1}\ldots \mathcal{X}_{i_s}),$$ where $I=(i_1,\ldots, i_s)$ is multi-index of length $s\leq 2^{\dim E}-1$ and $\mathcal{X}_{i_j}=\mathcal{A}_{i_j}$ or $(\mathcal{A}_{i_j})_g$ (note that $(\mathcal{A}_1)_g = -\mathcal{A}_1$). So regular $N$-tuples of non-degenerated bi-linear forms $(\sigma_{q_1}, \ldots, \sigma_{q_N})$ and $(\widetilde{\sigma}_{q_1}, \ldots, \widetilde{\sigma}_{q_N})$ are $\mathrm{GL}(E)/\mathbb{Z}_2$-equivalent if and only if 
$J_I^{\sigma}=J_I^{\widetilde{\sigma}}$, for all multi-indexes $I$ of length $s\leq 2^{\dim E}-1$. We call functions $J^\sigma_I$ \emph{the Procesi invariants}.

\begin{remark}
In case of real field $\mathbb{R}$ there is one more discrete invariant on $N$-tuple $(\sigma_{q_1}, \ldots, \sigma_{q_N})$, i.e. the signature of symmetric form $g$.
\end{remark}

\section{Classification}

In this section we use the rational $\mathrm{GL}(E)/\mathbb{Z}_2$-invariants $J^\sigma_I$ to classify regular $G$-orbits of the symbols from $\mathbf{Smbl}(\xi,\xi^t)$.

\subsection{General case}

First of all consider the pair of bi-linear forms $(\sigma_{q_i},\sigma_{q_j})$ and corresponding operators $(\mathcal{A}_i, \mathcal{A}_j)$. The $\mathrm{GL}(E)/\mathbb{Z}_2$-stabilizer of this pair of operators is trivial, if 
\begin{quote}
$(*)$ operators $\mathcal{A}_i$ and $\mathcal{A}_j$ have distinct eigenvalues and $[\mathcal{A}_i, \mathcal{A}_j]\neq 0$.     
\end{quote}
Indeed, if some element $A\in \mathrm{GL}(E)/\mathbb{Z}_2$ preserves such operators $\mathcal{A}_i$ and $\mathcal{A}_j$, then this operator is scalar. But $A$ is also orthogonal, so $A=\pm\mathbf{1}$.

Now consider $\mathrm{GL}(E)/\mathbb{Z}_2$-orbit of the $N$-tuple $(\sigma_{q_1}, \ldots, \sigma_{q_N})$ for $N\leq\binom{b+k-1}{k}$, which contains at least two bi-linear forms $\sigma_{q_i}$ and $\sigma_{q_j}$ satisfying the condition $(*)$ for $i$, $j\geq 2$. Then codimension of such orbit equals $Nm^2-m^2=(N-1)m^2$.

\begin{remark}
Note that the condition $(*)$ implies that $N\geq 3$. Case $N=2$ corresponds to the trivial and non-interasting situation $k=1$ and $n=2$.
\end{remark}

Also consider regular $G$-orbit of the symbol $\sigma$. Its codimension equals $\binom{n+k-1}{k} m^2-m^2$.

Let $\mathcal{F}(q_1,\ldots, q_N)$ be a field of rational $\mathrm{GL}(E)/\mathbb{Z}_2$-invariants of $N$-tuples $(\sigma_{q_1}, \ldots, \sigma_{q_N})$, where $N\leq\binom{n+k-1}{k}$. Then according to Rosenlicht theorem, field $\mathcal{F}(q_1,\ldots, q_N)$ is generated by Procesi invariants $J_I^\sigma$ and the transcendence degree of this field equals $$\tr\deg\mathcal{F}(q_1,\ldots,q_N)=(N-1)m^2$$ (see~\cite{Rosenlicht}).

Also denote by $\mathcal{F}$ the field of rational $\mathrm{GL}(E)/\mathbb{Z}_2$-invariants for the action on symbols. Then we have $$\tr\deg \mathcal{F}=\Big(\binom{n+k-1}{k}-1\Big)m^2.$$

We have field extensions $\mathcal{F}\hookrightarrow \mathcal{F}(q_1,\ldots,q_N)$ for all $N\leq\binom{n+k-1}{k}$. If we take $N=\binom{n+k-1}{k}$, then we get the algebraic extension, so the Prochesi invariants $F_I^\sigma$ for $N$-tuples $(\sigma_{q_1},\ldots,\sigma_{q_N})$ also generate the field $\mathcal{F}$ of rational $\mathrm{GL}(E)/\mathbb{Z}_2$ invariants of the symbols.

\begin{theorem}\label{th.E}
The field $\mathcal{F}$ of rational $\mathrm{GL}(E)/\mathbb{Z}_2$-invariants of the symbols $\sigma\in \mathrm{Hom}(E,E^t)\otimes \mathrm{\textsf{S}}^k T^*$ is generated by Procesi invariants $J_I^\sigma$ for $N$-tuples $(\sigma_{q_1}, \ldots, \sigma_{q_N})$, where $N=\binom{n+k-1}{k}$.
\end{theorem}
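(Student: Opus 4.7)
The plan is to reduce the statement to the generation result for $N$-tuples of bilinear forms developed in the preceding subsection, via a single equivariant identification, and then close the argument with the transcendence-degree and Rosenlicht bookkeeping already laid out in the paragraphs above.

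First, I would pick a basis $q_1, \ldots, q_N$ of $\mathrm{\textsf{S}}^k T^*$, where $N=\binom{n+k-1}{k}=\dim\mathrm{\textsf{S}}^k T^*$. The evaluation map
\[
\Phi\colon \mathrm{Hom}(E,E^t)\otimes \mathrm{\textsf{S}}^k T\;\longrightarrow\;\mathrm{Hom}(E,E^t)^{\oplus N},\qquad \sigma\mapsto(\sigma_{q_1},\ldots,\sigma_{q_N}),
\]
is then a linear isomorphism, and since $\mathrm{GL}(E)/\mathbb{Z}_2$ acts only on the $\mathrm{Hom}(E,E^t)$ factor of a symbol, $\Phi$ is $\mathrm{GL}(E)/\mathbb{Z}_2$-equivariant. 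Pulling back rational functions along $\Phi$ identifies the field $\mathcal{F}$ of invariants of symbols with the field $\mathcal{F}(q_1,\ldots,q_N)$ of invariants of $N$-tuples of bilinear forms for this specific choice.

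Next, I would invoke the machinery already set up: from the tuple $(\sigma_{q_1},\ldots,\sigma_{q_N})$ one extracts $g=(\sigma_{q_1})_s$ and the operators $\mathcal{A}_1=\mathcal{S}^g$, $\mathcal{A}_i=g^{-1}\sigma_{q_i}$ for $i\geq 2$, reducing the $\mathrm{GL}(E)/\mathbb{Z}_2$-problem to the $\mathrm{O}_g(E)/\mathbb{Z}_2$-problem for the operator tuple $(\mathcal{A}_1,\ldots,\mathcal{A}_N)$ on the Euclidean space $(E,g)$. Procesi's theorem~\ref{th.Pr} then produces generators of the rational invariants in the form of trace monomials $\tr(\mathcal{X}_{i_1}\cdots \mathcal{X}_{i_s})$ with $\mathcal{X}_{i_j}\in\{\mathcal{A}_{i_j},(\mathcal{A}_{i_j})_g\}$ and $s\leq 2^{\dim E}-1$; these are precisely the Procesi invariants $J_I^\sigma$. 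The transcendence-degree computation already recorded in the paper, namely $\tr\deg\mathcal{F}=(N-1)m^2=\tr\deg\mathcal{F}(q_1,\ldots,q_N)$, combined with Rosenlicht's theorem, then guarantees that these explicitly exhibited invariants generate the entire field $\mathcal{F}$.

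The main difficulty I anticipate is the genericity check: verifying that the regularity assumptions required by the preceding constructions---non-degeneracy of $\sigma_{q_1}$ and of its symmetric part $g$, simplicity of the spectrum of $\mathcal{S}^g$, and the pair condition $(*)$ for some $\mathcal{A}_i,\mathcal{A}_j$---all hold on a Zariski-dense open subset of the space of symbols for a generic basis $(q_1,\ldots,q_N)$. This is what legitimises the reduction from bilinear forms to operators and ensures that Rosenlicht's theorem separates the regular orbits. Conceptually it is routine, but it is the one place where nontrivial work beyond assembling the earlier results is required.
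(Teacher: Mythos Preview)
Your proposal is correct and follows essentially the same route as the paper. The paper's argument (contained in the paragraphs immediately preceding the theorem) computes $\tr\deg\mathcal{F}=\bigl(\binom{n+k-1}{k}-1\bigr)m^2$ and $\tr\deg\mathcal{F}(q_1,\ldots,q_N)=(N-1)m^2$, notes the embedding $\mathcal{F}\hookrightarrow\mathcal{F}(q_1,\ldots,q_N)$, and concludes that for $N=\binom{n+k-1}{k}$ the extension is algebraic, whence the Procesi invariants generate $\mathcal{F}$; your version sharpens this by writing down the explicit $\mathrm{GL}(E)/\mathbb{Z}_2$-equivariant isomorphism $\Phi$ induced by a choice of basis, which identifies the two fields outright rather than merely showing the extension is algebraic. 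The genericity check you flag is exactly the point the paper leaves implicit by restricting throughout to regular and non-degenerated objects.
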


Let's back to $\mathrm{GL}(T)\times(\mathrm{GL}(E)/\mathbb{Z}_2)$-classification of regular symbols $\sigma\in\mathbf{Smbl}(\xi,\xi^t)$. It follows from theorem~\ref{th.E}, that it is enough to describe the $\mathrm{GL}(T)$-orbits of set $\{J^\sigma_I\}$ of Procesi invariants. Note that we act by linear transformations not only on the arguments $q_1$, \ldots, $q_N$ of the functions $J^\sigma_I$, but on the symbol $\sigma$ too.

First of all let us consider the action of scale transformations on the Procesi invariants. It follows from the formula~(\ref{eq.E}) that this action is trivial, i.e. $$J^{\mu\sigma}_I(\lambda q_1,\ldots,\lambda q_N)=J^\sigma_I(q_1,\ldots,q_N),$$ for all $\lambda$, $\mu\neq 0$. 

So the Procesi invariants are separately homogeneous (in variables $q_i$ and $\sigma$) of degree 0. 

Therefore the the action of group $\mathrm{GL}(T)$ on Procesi invariants in cases $E^t=E^*$ and $E^t=E^\flat$ coincide, i.e. the equivalence of the symbols in all these cases is given by the unique construction.

\begin{remark}
It is clear that the construction of Procesi invariants can be applied also for the case of densities $E^t=\mathrm{Hom}(E,(\Lambda^n)^rT)$, where $r\in \mathbb{Q}$.
\end{remark}

Let us return to the action of the group $\mathrm{GL}(T)$ on Procesi invariants. We have seen, that the numerators and denominators of these invariants are homogeneous polynomials of the same degree. It is well-known (see, for example,~\cite{VinPop}), that the $\mathrm{GL}(T)$-stabilizer of such regular polynomials is trivial, when their degree is more than 2. 

We say, that symbol $\sigma\in \mathbf{Smbl}(\xi,\xi^t)$ is \emph{non-degenerated}, if the condition $(*)$ holds for some pair of points $(q_i,q_j)$ in an $N$-tuple $(q_1,\ldots,q_N)$ in general position and if there exists a Procesi invariant, which numerator degree is more than 2.

Now we are ready to provide the $G$-equivalence of two non-degenerated symbols. We start from the simple case of the first-order operators, i.e. when $k=1$. Then $N=n=\dim T$. 

Take two tuples $(\sigma, q_1,\ldots,q_n)$ and $(\widetilde{\sigma}, \widetilde{q}_1,\ldots,\widetilde{q}_n)$. If $n$-tuples $(q_1,\ldots,q_n)$ and $(\widetilde{q}_1,\ldots,\widetilde{q}_n)$ are in general position, then there exists a unique transformation $A_1\in\mathrm{GL}(T)$, which maps the first tuple to the second tuple. So we may assume that we have two following tuples: $(\sigma, q_1,\ldots,q_n)$ and $(\widetilde{\sigma}, q_1,\ldots,q_n)$. Now assume that $J^\sigma_I(q_1,\ldots,q_n)=J^{\widetilde{\sigma}}_I(q_1,\ldots,q_n)$ for all multi-indexes $I$. Then according to theorem~\ref{th.E} there exists a unique element $A_2\in\mathrm{GL}(E)/\mathbb{Z}_2$, such that $A_2\circ \sigma=\widetilde{\sigma}$. Hence, we get, that symbols $\sigma$ and $\widetilde{\sigma}$ are $G$-equivalent, i.e. $(A_1,A_2)\circ\sigma=\widetilde{\sigma}$. As functions $J^\sigma_I$ are $\mathrm{GL}(E)/\mathbb{Z}_2$-invariant, we get, that these functions separate the $G$-orbits of non-degenerated symbols.

\begin{theorem}
Two non-degenerated symbols $\sigma$, $\widetilde{\sigma}\in\mathbf{Smbl}(\xi,\xi^t)$ are $G$-equivalent if and only if $J^\sigma_I\equiv J^{\widetilde{\sigma}}_I$, for all multi-indexes $I$.
\end{theorem}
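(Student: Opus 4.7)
The plan is to extend the first-order ($k=1$) argument just given. The new difficulty for $k>1$ is that $\mathrm{GL}(T)$ can no longer map an arbitrary $N$-tuple of forms to any other --- its orbits on $(\mathrm{\mathsf{S}}^kT^*)^{\times N}$ have codimension $N^2-n^2>0$ --- so the element $A_1\in\mathrm{GL}(T)$ will be extracted from the $\mathrm{GL}(T)$-transformation law of the Procesi invariants themselves rather than from a basis change. Necessity is then immediate: if $(A_1,A_2)\circ\sigma=\widetilde{\sigma}$, the $\mathrm{GL}(E)/\mathbb{Z}_2$-invariance of the Procesi functions (Theorem~\ref{th.Pr}) absorbs $A_2$ and leaves the identity $J^{\widetilde{\sigma}}_I(q_1,\ldots,q_N)=J^\sigma_I(A_1^{-1}\cdot q_1,\ldots,A_1^{-1}\cdot q_N)$, which is the content of $J^\sigma_I\equiv J^{\widetilde{\sigma}}_I$ after the $\mathrm{GL}(T)$ change of variables.

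For sufficiency I would first recover $A_1$. The non-degeneracy hypothesis furnishes a Procesi invariant $J^\sigma_{I_0}$ whose numerator is a homogeneous polynomial on $(\mathrm{\mathsf{S}}^kT^*)^{\times N}$ of degree strictly greater than $2$; the result of Vinberg--Popov quoted in the preceding paragraph then forces the $\mathrm{GL}(T)$-stabilizer of this regular polynomial to be trivial. Combined with $J^\sigma_{I_0}\equiv J^{\widetilde{\sigma}}_{I_0}$, this pins down a unique $A_1\in\mathrm{GL}(T)$, and after replacing $\widetilde{\sigma}$ by $A_1^{-1}\circ\widetilde{\sigma}$ the task reduces to producing $A_2\in\mathrm{GL}(E)/\mathbb{Z}_2$ with $A_2\circ\sigma=A_1^{-1}\circ\widetilde{\sigma}$.

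Next I would pick a generic $N$-tuple $(q_1,\ldots,q_N)$; since $N=\binom{n+k-1}{k}=\dim\mathrm{\mathsf{S}}^kT^*$, such a tuple is a basis of $\mathrm{\mathsf{S}}^kT^*$ at which condition $(*)$ is satisfied. Theorem~\ref{th.E} asserts that the values of the Procesi invariants at $(q_1,\ldots,q_N)$ separate the regular $\mathrm{GL}(E)/\mathbb{Z}_2$-orbits of the tuple of bi-linear forms $(\sigma_{q_1},\ldots,\sigma_{q_N})$, so coincidence of all $J_I$ yields $A_2\in\mathrm{GL}(E)/\mathbb{Z}_2$ with $A_2\circ\sigma_{q_i}=(A_1^{-1}\circ\widetilde{\sigma})_{q_i}$ for every $i$. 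Because $\sigma$ and $\widetilde{\sigma}$ are linear on $\mathrm{\mathsf{S}}^kT^*$ while $\{q_i\}$ is a basis, this identity extends to $A_2\circ\sigma=A_1^{-1}\circ\widetilde{\sigma}$, giving $(A_1,A_2)\circ\sigma=\widetilde{\sigma}$.

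The main obstacle is the first stage. In the $k=1$ setting $A_1$ was available essentially for free by sending any basis of $T^*$ to any other, whereas for $k>1$ the group $\mathrm{GL}(T)$ acts through a representation of dimension only $n^2$ on the much larger space $(\mathrm{\mathsf{S}}^kT^*)^{\times N}$, and $A_1$ can only be read off indirectly from a single sufficiently generic Procesi polynomial of degree $>2$. Showing carefully that the non-degeneracy condition really supplies such a polynomial, and that the resulting $A_1$ is simultaneously compatible with every other Procesi invariant, is where most of the work is concentrated.
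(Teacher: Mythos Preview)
The paper's own argument for this statement is the paragraph immediately preceding it, and it is explicitly restricted to $k=1$: there $N=n$, any two generic $n$-tuples in $T^*$ are bases, a unique $A_1\in\mathrm{GL}(T)$ carries one to the other, and after that reduction Theorem~\ref{th.E} supplies $A_2$. For $k\geq 2$ the paper does \emph{not} extract $A_1$ from the stabilizer of a Procesi polynomial as you propose; instead it passes, in the very next theorem, to \emph{special} tuples --- the monomial bases of $\mathrm{\mathsf{S}}^kT^*$ induced by bases of $T$ --- on which $\mathrm{GL}(T)$ again acts transitively, and then repeats the $k=1$ argument verbatim. In both regimes $A_1$ is obtained for free from a tuple-to-tuple identification, never from an analysis of the invariants themselves; the Vinberg--Popov remark about trivial stabilizers is used in the paper only to justify calling such symbols non-degenerated, not as a mechanism for producing $A_1$.

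Your route is genuinely different and, as you yourself concede in the final paragraph, not closed. The Vinberg--Popov triviality of stabilizers is stated for \emph{regular} polynomials of degree $>2$, whereas the non-degeneracy hypothesis only promises \emph{some} Procesi numerator of degree $>2$; nothing guarantees it is regular in their sense, so ``this pins down a unique $A_1$'' is not yet justified. Even granting that step, you still owe an argument that the $A_1$ recovered from a single index $I_0$ simultaneously aligns all the other $J_I$ --- a compatibility you flag but do not establish. The paper's special-tuple device sidesteps both difficulties entirely, and for the present theorem (the $k=1$ case) neither difficulty even arises.
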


The similar result holds for the symbols of differential operators of an arbitrary order. Assume, that $n$, $k\geq 2$. Then $N\geq n+1$.
Note that we cannot take two general tuples $(q_1,\ldots,q_N)$ and $(\widetilde{q}_1,\ldots,\widetilde{q}_N)$ in this case, because such tuples may be non-equivalent with respect to the action of the group $\mathrm{GL}(T)$. But one can take two bases in $T$ and all symmetric monomials $(q_1, \ldots, q_N)$ and $(\widetilde{q}_1, \ldots, \widetilde{q}_N)$ of degree $k$. Such monomials form two bases in  space $\mathrm{\textsf{S}}^kT^*$, and these bases are $\mathrm{GL}(T)$-equivalent. So we can always take the tuples $(\sigma, q_1,\ldots,q_N)$ and $(\widetilde{\sigma}, q_1,\ldots,q_N)$ generated by two bases on $T$ and provide the same constructions as in case $k=1$. We call such tuples \emph{special}.

So, we have the following theorem.

\begin{theorem}
Two non-degenerated symbols $\sigma$, $\widetilde{\sigma}\in\mathbf{Smbl}(\xi,\xi^t)$ are $G$-equivalent if and only if for the special tuple $(q_1,\ldots,q_N)$ in general position $J^\sigma_I(q_1,\ldots,q_N)=J^{\widetilde{\sigma}}_I(q_1,\ldots,q_N)$, for all multi-indexes $I$.
\end{theorem}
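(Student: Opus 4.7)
My plan is to follow the same scheme as the $k=1$ case, with the essential modification that one must restrict attention to \emph{monomial} special tuples rather than arbitrary $N$-tuples in general position. The underlying reason is that, when $n,k\geq 2$ and $N>n$, two generic $N$-tuples in $\mathrm{\textsf{S}}^k T^*$ are typically not $\mathrm{GL}(T)$-equivalent on dimensional grounds, whereas any two monomial bases of $\mathrm{\textsf{S}}^k T^*$ coming from bases of $T$ always are.

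First I would reduce to a common tuple. Given two special tuples $(\sigma, q_1, \ldots, q_N)$ and $(\widetilde{\sigma}, \widetilde{q}_1, \ldots, \widetilde{q}_N)$ associated respectively with bases $\{e_i\}$ and $\{\widetilde e_i\}$ of $T$, the unique element $A_1\in\mathrm{GL}(T)$ sending $\{e_i\}$ to $\{\widetilde e_i\}$ induces a linear map on $\mathrm{\textsf{S}}^k T^*$ that takes the monomial basis $(q_1,\ldots,q_N)$ onto $(\widetilde q_1,\ldots,\widetilde q_N)$. Replacing $\sigma$ by $A_1\circ\sigma$, I can then assume both special tuples coincide.

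Next I would invoke Theorem~\ref{th.E}. Since $(q_1,\ldots,q_N)$ is a basis of $\mathrm{\textsf{S}}^k T^*$, the $\mathrm{GL}(E)/\mathbb{Z}_2$-equivalence class of a symbol is completely determined by the $\mathrm{GL}(E)/\mathbb{Z}_2$-equivalence class of the associated $N$-tuple of bilinear forms $(\sigma_{q_1},\ldots,\sigma_{q_N})$. By Theorem~\ref{th.E} and the Rosenlicht theorem, the Procesi invariants $J_I^\sigma(q_1,\ldots,q_N)$ generate the field of rational $\mathrm{GL}(E)/\mathbb{Z}_2$-invariants of such $N$-tuples and hence separate their regular orbits. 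The equalities $J_I^\sigma(q_1,\ldots,q_N)=J_I^{\widetilde\sigma}(q_1,\ldots,q_N)$ for all multi-indices $I$ therefore yield some $A_2\in\mathrm{GL}(E)/\mathbb{Z}_2$ with $A_2\circ\sigma_{q_i}=\widetilde\sigma_{q_i}$ for every $i$, and by linearity $A_2\circ\sigma=\widetilde\sigma$. Combined with $A_1$, this produces the required $G$-equivalence $(A_1,A_2)\circ\sigma=\widetilde\sigma$. The converse is immediate: after the reduction via $A_1$ both symbols yield $\mathrm{GL}(E)/\mathbb{Z}_2$-conjugate tuples of bilinear forms, and the Procesi invariants are by construction constant on $\mathrm{GL}(E)/\mathbb{Z}_2$-orbits.

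I expect the main obstacle to lie in justifying that, for a generic choice of basis $\{e_i\}$ of $T$, the associated monomial tuple $(q_1,\ldots,q_N)$ actually satisfies all of the non-degeneracy hypotheses used above: condition $(*)$ must hold for some pair $(q_i,q_j)$ with $i,j\geq 2$, the symmetric form $g:=(\sigma_{q_1})_s$ must be reversible with simple spectrum of $\mathcal{S}^g$, and at least one Procesi invariant must have numerator of degree greater than $2$ so that the residual $\mathrm{GL}(T)$-stabilizer is trivial. Each of these failures cuts out a proper Zariski-closed subset of the variety of bases of $T$, and for any fixed non-degenerated $\sigma$ a generic basis simultaneously avoids all of them, so that the reduction above applies without loss of generality.
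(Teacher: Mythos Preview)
Your proposal is correct and follows essentially the same approach as the paper: reduce to a common monomial tuple via an element $A_1\in\mathrm{GL}(T)$ (exploiting that any two monomial bases of $\mathrm{\textsf{S}}^kT^*$ arising from bases of $T$ are $\mathrm{GL}(T)$-equivalent), then invoke Theorem~\ref{th.E} and the Procesi invariants to produce $A_2\in\mathrm{GL}(E)/\mathbb{Z}_2$. Your final paragraph, making explicit that the required non-degeneracy conditions hold for a generic choice of basis of $T$, is a point the paper leaves implicit under the phrase ``in general position''.
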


\subsection{Self-adjoint case}

In this section we consider the special case of symbols $\sigma\colon \mathrm{\textsf{S}}^kT^*\to \mathrm{Hom}(E,E^t)$. More precisely, we call symbol $\sigma$ \emph{self-adjoint}, if bi-linear form $\sigma_q$ is self-adjoint. 

As in general case, we take $N$-tuple $(\sigma_{q_1},\ldots,\sigma_{q_N})$ of self-adjoint bi-linear forms and associate with each form $\sigma_{q_i}$ linear operator $\mathop{A}_i:=g^{-1}\sigma_{q_i}$, where $i=2$, \ldots, $N$ and $g:=\sigma_{q_1}$.

\begin{remark}
Operator $\mathcal{S}^{\sigma_{q_1}}$ in our case equals zero, so we will not take it into account.
\end{remark}

It follows from theorem~\ref{th.b-A}, that the invariants of our tuples are generated by functions $$K^\sigma_I\colon (\mathrm{\textsf{S}}^kT^*)^{\times N}\to \mathbb{R}, \quad K^\sigma_I:=\tr(\mathcal{X}_{i_1}\ldots \mathcal{X}_{i_s}),$$ where $I=(i_1,\ldots, i_s)$ is multi-index of length $s\leq 2^{\dim E}-1$ and $\mathcal{X}_{i_j}=\mathcal{A}_{i_j}$ or $(\mathcal{A}_{i_j})_g$, where $i_j\geq 2$. 

Now consider $\mathrm{GL}(E)/\mathbb{Z}_2$-orbit of the $N$-tuple $(\sigma_{q_1}, \ldots, \sigma_{q_N})$ of symmetric bi-linear forms, which contains at least two forms $\sigma_{q_i}$ and $\sigma_{q_j}$, $i$, $j\geq 2$ satisfying the condition $(*)$. Then the $\mathrm{GL}(E)/\mathbb{Z}_2$-stabilizer of $N$-tuple $(\sigma_{q_1}, \ldots, \sigma_{q_N})$ is trivial. Then codimension of such orbit equals $N\cdot\frac{m(m+1)}{2}-m^2$.

Also consider regular $G$-orbit of the self-adjoint symbol $\sigma$. Its codimension equals $\binom{n+k-1}{k} \cdot\frac{m(m+1)}{2}-m^2$.

All other constructions from general situation can be applied in our special case without any changes. Summarize these results in the next theorem.

\begin{theorem}\label{th.self-adj}
1. The field $\mathcal{F}$ of rational $\mathrm{GL}(E)/\mathbb{Z}_2$-invariants of the symmetric symbols $\sigma\in \mathrm{Hom}(E,E^t)\otimes \mathrm{\textsf{S}}^k T^*$ is generated by Procesi invariants $K_I^\sigma$ for $N$-tuples $(\sigma_{q_1}, \ldots, \sigma_{q_N})$, where $N=\binom{n+k-1}{k}$.

2. Two non-degenerated self-adjoint symbols $\sigma$, $\widetilde{\sigma}\in\mathbf{Smbl}(\xi,\xi^t)$ are $G$-equivalent if and only if $K^\sigma_I\equiv K^{\widetilde{\sigma}}_I$, for all multi-indexes $I$.
\end{theorem}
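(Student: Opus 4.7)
The plan is to mirror the argument established for the general case (Theorem \ref{th.E} and its $G$-equivalence corollary), restricted to the subspace of symmetric bi-linear forms and with the orbit-dimension bookkeeping adjusted accordingly.

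For part 1, I would fix a tuple $(q_1,\ldots,q_N)$ with $N=\binom{n+k-1}{k}$ forming a basis of $\mathrm{\textsf{S}}^k T^*$, so that a self-adjoint symbol $\sigma$ is recovered from the tuple $(\sigma_{q_1},\ldots,\sigma_{q_N})$ of symmetric bi-linear forms in $\mathrm{Hom}(E,E^t)$. Setting $g:=\sigma_{q_1}$ and $\mathcal{A}_i:=g^{-1}\sigma_{q_i}$ for $i\geq 2$, each $\mathcal{A}_i$ is $g$-self-adjoint, and the $\mathrm{GL}(E)/\mathbb{Z}_2$-equivalence class of the tuple of forms coincides with the $\mathrm{O}_g(E)/\mathbb{Z}_2$-equivalence class of the tuple of operators $(\mathcal{A}_2,\ldots,\mathcal{A}_N)$, through the correspondence underlying Theorem \ref{th.b-A}. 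Procesi's Theorem \ref{th.Pr} then supplies the traces $K^\sigma_I$ as generators of the polynomial invariant algebra for the orthogonal action. A Rosenlicht-style transcendence degree argument, based on the codimension count $N\cdot\tfrac{m(m+1)}{2}-m^2$ (which itself uses the triviality of a generic stabilizer guaranteed by condition $(*)$ for some pair $(\mathcal{A}_i,\mathcal{A}_j)$ with $i,j\geq 2$), then promotes generation of the polynomial algebra to generation of the whole rational invariant field $\mathcal{F}$.

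For part 2, I would use the standard scheme of the general theorem. Given two non-degenerated self-adjoint symbols $\sigma,\widetilde{\sigma}$ with $K^\sigma_I\equiv K^{\widetilde{\sigma}}_I$ for all $I$, the observation that any two special tuples, i.e.\ those built from bases of $T$ via all symmetric monomials of degree $k$, are $\mathrm{GL}(T)$-equivalent reduces the problem to the case where both symbols are evaluated on the same tuple $(q_1,\ldots,q_N)$. Equality of all Procesi invariants combined with part 1 then supplies a unique element of $\mathrm{GL}(E)/\mathbb{Z}_2$ intertwining the two symbols, yielding $G$-equivalence. The converse is automatic from $G$-invariance of the $K^\sigma_I$; the existence of some $K^\sigma_I$ whose numerator has degree greater than $2$ (encoded in the non-degeneracy assumption) is what guarantees that the $\mathrm{GL}(T)$-factor of the intertwiner is itself uniquely determined.

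The main obstacle I expect is the careful tracking of the shifted index range: since $b_a=0$ in the self-adjoint case, one has $\mathcal{S}^{\sigma_{q_1}}=0$, which carries no information, so the operator tuple entering Procesi's theorem starts at index $2$ rather than $1$. This shift must propagate consistently through the codimension calculation (the $N$ copies of the $\tfrac{m(m+1)}{2}$-dimensional symmetric-form space modulo the $m^2$-dimensional group must yield a trivial generic stabilizer) and through the verification that condition $(*)$ remains meaningful for pairs with $i,j\geq 2$. Once these points are in place, every other ingredient, including the use of special tuples and the scale-invariance of the $K^\sigma_I$, transfers verbatim from the general case.
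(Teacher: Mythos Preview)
Your proposal is correct and follows exactly the paper's approach: the paper does not give a separate proof but simply states that ``all other constructions from general situation can be applied in our special case without any changes,'' after recording precisely the two adjustments you identify (that $\mathcal{S}^{\sigma_{q_1}}=0$ so the operator tuple starts at index $2$, and that the orbit codimension becomes $N\cdot\tfrac{m(m+1)}{2}-m^2$). Your outline is in fact more detailed than what the paper provides.
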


\subsection{Skew-symmetric case}

Finally, we consider another special case of symbols $\sigma\colon \mathrm{\textsf{S}}^kT^*\to \mathrm{Hom}(E,E^t)$. Namely, we call symbol $\sigma$ \emph{skew-symmetric}, if bi-linear form $\sigma_q$ is skew-symmetric. 

As in general case, we assume that $\dim E=m=2r$ is even and take $N$-tuple $(\sigma_{q_1},\ldots,\sigma_{q_N})$ of skew-symmetric non-degenerated bi-linear forms and associate with each form $\sigma_{q_i}$ linear operator $\mathop{A}_i:=\omega^{-1}\sigma_{q_i}$, where $i=2$, \ldots, $N$ and $\omega:=\sigma_{q_1}$.

It follows from theorem~\ref{th.b-A}, applying to the symplectic group $\mathrm{Sp}_\omega(E)$ (see~\cite{Pr1}), that the invariants of our tuples are generated by functions $$L^\sigma_I\colon (\mathrm{\textsf{S}}^kT^*)^{\times N}\to \mathbb{R}, \quad K^\sigma_I:=\tr(\mathcal{X}_{i_1}\ldots \mathcal{X}_{i_s}),$$ where $I=(i_1,\ldots, i_s)$ is multi-index of length $s\leq 2^{\dim E}-1$ and $\mathcal{X}_{i_j}=\mathcal{A}_{i_j}$ or $(\mathcal{A}_{i_j})_\omega$, where $i_j\geq 2$. 

Now consider $\mathrm{GL}(E)/\mathbb{Z}_2$-orbit of the $N$-tuple $(\sigma_{q_1}, \ldots, \sigma_{q_N})$ of skew-symmetric bi-linear forms, which contains at least two forms $\sigma_{q_i}$ and $\sigma_{q_j}$, $i$, $j\geq 2$ satisfying the condition $(*)$. Let us prove that the $\mathrm{GL}(E)/\mathbb{Z}_2$-stabilizer of pair $(\sigma_{q_i}, \sigma_{q_j})$ is trivial. 

Put $\alpha:=\sigma_{q_i}$ and $\beta:=\sigma_{q_j}$ ---  skew-symmetric bi-linear forms. Denote $\mathcal{A}:=\mathcal{A}_i$ and $\mathcal{B}:=\mathcal{B}_j$, i.e. $\alpha(x,y)=\omega(\mathcal{A}x,y)$ and $\beta(x,y)=\omega(\mathcal{B}x,y)$ for all $x$, $y\in E$. Then $$\omega(\mathcal{A}x,y)=\alpha(x,y)=-\alpha(y,x)=-\omega(\mathcal{A}y,x)=\omega(x,\mathcal{A}y),$$ i.e. $\mathcal{A}=\mathcal{A}_\omega$.

Recall, that Pfiffian $\mathrm{Pf}(\alpha)$ of the form $\alpha$ is defined by relation $\alpha^{\wedge r}=\mathrm{Pf}(\alpha)\omega^{\wedge r}$.

With the help of Pfaffian one can calculate eigenvalues of the skew-symmetric forms. Namely, consider the equation $\mathrm{Pf}(\omega-\lambda \alpha)=0$. Assume that this equation has $r$ distinct roots $\lambda_1$, \ldots, $\lambda_r$. The corresponding eigenspaces $E_1$, \ldots, $E_r$ have dimension 2 and decompose space $E$ in the direct sum $E=\bigoplus\limits_{i=1}^r E_i$, and $A\mid_{E_i}=\lambda_i\cdot \mathbf{1}$.

Note that if $e_i\in E_i$ and $e_j\in E_j$ are two arbitrary eigenvectors for $i\neq j$, then $$\lambda_i\omega(e_i,e_j) = \omega(\mathcal{A}e_i,e_j)=\omega(e_1,\mathcal{A}e_j)=\lambda_j\omega(e_i,e_j),$$ hence, $\omega(e_i,e_j)=0$ for all $e_i\in E_i$ and $e_j\in E_j$.

Now let $\mathcal{C}\in\mathrm{Sp}_\omega(E)$ be non-trivial operator, which commutes with operator $\mathcal{A}$. Then let $\mathcal{C}_i:=\mathcal{C}\mid_{E_i}$ and $\omega_i:=\omega\mid_{E_i}$ be the restrictions of operator $\mathcal{C}$ and skew-symmetric form $\omega$ on space $E_i$. Hence $\mathcal{C}_i\in \mathrm{Sp}_{\omega_i}(E_i)$ and there exists an eigenbasis $\{e_i, f_i\}$ of operator $\mathcal{C}_i$ on space $E_i$.

So we construct the common eigenbasis $\{e_1,f_1,\ldots,e_r,f_r\}$ for operators $\mathcal{C}$ and $\mathcal{A}$. In the same way one can construct the common eigenbasis for operators $\mathcal{C}$ and $\mathcal{B}$. Finally, we get, that operators $\mathcal{A}$ and $\mathcal{B}$ have common eigenbasis, which is impossible, because $[\mathcal{A},\mathcal{B}]\neq 0$.

Now consider $\mathrm{GL}(E)/\mathbb{Z}_2$-orbit of the $N$-tuple $(\sigma_{q_1}, \ldots, \sigma_{q_N})$ of skew-symmetric bi-linear forms, which contains at least two forms $\sigma_{q_i}$ and $\sigma_{q_j}$, $i$, $j\geq 2$ satisfying the condition $(*)$. It follows, that codimension of such orbit equals $N\cdot\frac{m(m-1)}{2}-m^2$.

Also consider regular $G$-orbit of the skew-symmetric symbol $\sigma$. Its codimension equals $\binom{n+k-1}{k} \cdot\frac{m(m-1)}{2}-m^2$.

All other constructions from general situation can be applied in our special case without any changes. Summarize these results in the next theorem.

\begin{theorem}
1. The field $\mathcal{F}$ of rational $\mathrm{GL}(E)/\mathbb{Z}_2$-invariants of the skew-symmetric symbols $\sigma\in \mathrm{Hom}(E,E^t)\otimes \mathrm{\textsf{S}}^k T^*$ is generated by Procesi invariants $L_I^\sigma$ for $N$-tuples $(\sigma_{q_1}, \ldots, \sigma_{q_N})$, where $N=\binom{n+k-1}{k}$.

2. Two non-degenerated skew-symmetric symbols $\sigma$, $\widetilde{\sigma}\in\mathbf{Smbl}(\xi,\xi^t)$ are $G$-equivalent if and only if $L^\sigma_I\equiv L^{\widetilde{\sigma}}_I$ for all multi-indexes $I$.
\end{theorem}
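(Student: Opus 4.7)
The plan is to mimic the proof of Theorem~\ref{th.E} and its companion classification result, replacing the orthogonal Procesi theorem with its symplectic counterpart (Theorem~\ref{th.Pr.S}) and recovering the invariants from the tuple of operators $\mathcal{A}_i=\omega^{-1}\sigma_{q_i}$ together with the distinguished symplectic form $\omega=\sigma_{q_1}$. First I would fix a non-degenerated skew-symmetric symbol $\sigma$ and observe that by the formula $A\circ b=A^{-t}bA^{-1}$, the $\mathrm{GL}(E)$-action on the pair $(\omega,\sigma_{q_i})$ is intertwined with the conjugation action on the operator $\mathcal{A}_i$; the condition $\mathcal{A}=\mathcal{A}_\omega$ derived in the preceding paragraph shows that each $\mathcal{A}_i$ is already $\omega$-self-dual, so Theorem~\ref{th.Pr.S} applied to $\mathrm{Sp}_\omega(E)$ gives that the field of rational invariants of the reduced $N$-tuple $(\mathcal{A}_2,\dots,\mathcal{A}_N)$ is generated by traces of words $\tr(\mathcal{X}_{i_1}\cdots\mathcal{X}_{i_s})$, i.e.\ by the functions $L^\sigma_I$.

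Next I would verify that on a regular tuple the $\mathrm{GL}(E)/\mathbb{Z}_2$-stabilizer is trivial: this is exactly the Pfaffian argument already given in the text, which uses the simplicity of the spectrum of $\omega^{-1}\alpha$ on condition $(*)$ to split $E=\bigoplus E_i$ into $2$-dimensional eigenspaces that are $\omega$-orthogonal, and then exploits the non-commutation $[\mathcal{A},\mathcal{B}]\neq 0$ to exclude a common eigenbasis. From this triviality of the stabilizer I would compute the codimension of the generic $\mathrm{GL}(E)/\mathbb{Z}_2$-orbit of $N$-tuples $(\sigma_{q_1},\dots,\sigma_{q_N})$ as $N\cdot\tfrac{m(m-1)}{2}-m^2$, and of the generic orbit of a symbol as $\binom{n+k-1}{k}\cdot\tfrac{m(m-1)}{2}-m^2$. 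Rosenlicht's theorem then yields that the $L^\sigma_I$, being algebraically independent in the correct transcendence degree and separating generic orbits, generate the field $\mathcal{F}(q_1,\dots,q_N)$.

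For part (1), I would conclude as in Theorem~\ref{th.E}: the inclusions $\mathcal{F}\hookrightarrow\mathcal{F}(q_1,\dots,q_N)$ of fields become algebraic once $N=\binom{n+k-1}{k}$, which is the codimension count of the $\mathrm{GL}(E)/\mathbb{Z}_2$-orbit in the full space of symbols, so the $L^\sigma_I$ generate $\mathcal{F}$. For part (2), I would apply the $\mathrm{GL}(T)$-normalisation strategy of the non-symmetric case: choosing a special tuple $(q_1,\dots,q_N)$ coming from the symmetric monomials of a basis in $T$, any two special tuples are $\mathrm{GL}(T)$-equivalent, so one may transport $\widetilde\sigma$ to make the argument tuples coincide with those of $\sigma$. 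The hypothesis $L^\sigma_I\equiv L^{\widetilde\sigma}_I$ then produces, via part (1), a unique element of $\mathrm{GL}(E)/\mathbb{Z}_2$ identifying $\sigma$ with $\widetilde\sigma$; homogeneity of the $L^\sigma_I$ of degree $0$ in the symbol (checked by the scaling computation in the general case) guarantees that the remaining freedom is absorbed. Conversely, $G$-invariance of the $L^\sigma_I$ is immediate from their construction.

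The main obstacle I expect is the stabilizer triviality in the symplectic setting, since in the orthogonal case the identity $A=\pm\mathbf{1}$ followed at once from scalar-plus-orthogonal; here one needs the Pfaffian eigenspace decomposition to exclude non-scalar symplectic commutants of $\mathcal{A}$, and then an auxiliary application to $\mathcal{B}$ to force a common eigenbasis contradicting $[\mathcal{A},\mathcal{B}]\neq 0$. Fortunately this step is already carried out in the paragraphs preceding the theorem, so the remainder of the argument is a direct transcription of the general and self-adjoint cases with the codimension $\tfrac{m(m+1)}{2}$ replaced by $\tfrac{m(m-1)}{2}$.
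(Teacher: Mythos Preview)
Your proposal is correct and follows essentially the same route as the paper: the text preceding the theorem already supplies the symplectic Procesi application, the Pfaffian eigenspace argument for stabilizer triviality, and the codimension count, after which the paper simply says ``all other constructions from general situation can be applied in our special case without any changes''---exactly the transcription you describe. One small wording slip: the $L^\sigma_I$ are not themselves algebraically independent (there are far more of them than the transcendence degree); the logic is rather that Procesi's theorem guarantees they generate the $\mathrm{Sp}_\omega(E)$-invariant field, and Rosenlicht identifies the transcendence degree of that field with the orbit codimension.
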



\end{document}